\newtheorem{theorem}{Theorem}
\newtheorem{lemma}[theorem]{Lemma}
\newtheorem{proposition}[theorem]{Proposition}
\theoremstyle{definition}
\theoremstyle{remark}
\newtheorem{remark}[theorem]{Remark}
\newcommand{\norm}[1]{\left\lVert#1\right\rVert}
\DeclarePairedDelimiter{\ceil}{\lceil}{\rceil}
\newcommand{\CAT}{{\rm{CAT}(0)}}
\newcommand{\ql}[2]{\left\langle\overrightarrow{#1},\overrightarrow{#2}\right\rangle}
\begin{document}

\title[On the proximal point algorithm for strongly quasiconvex functions]{On the proximal point algorithm for strongly quasiconvex functions in Hadamard spaces}

\author{Nicholas Pischke}
\date{\today}
\maketitle
\vspace*{-5mm}
\begin{center}
{\scriptsize Department of Mathematics, Technische Universit\"at Darmstadt,\\
Schlossgartenstra\ss{}e 7, 64289 Darmstadt, Germany, \ \\ 
E-mail: pischke@mathematik.tu-darmstadt.de}
\end{center}

\maketitle
\begin{abstract}
We prove the convergence of the proximal point algorithm for finding the unique minimizer of a strongly quasiconvex function in general nonlinear Hadamard spaces, generalizing a recent result due to F.\ Lara. Our argument is rather elementary and brief and relies only on a few properties of strongly quasiconvex functions and their proximal operators which are established here for the first time over these nonlinear spaces. In particular, our convergence proof is fully effective and actually yields fast (ranging up to linear) rates of convergence for the iterates towards the solution and for the function values towards the minimum. These rates are novel even in the context of Euclidean spaces.
\end{abstract}
\noindent
{\bf Keywords:} Proximal point algorithm; rates of convergence; strongly quasiconvex functions; Hadamard spaces; proof mining\\ 
{\bf MSC2020 Classification:} 47J25, 90C26, 03F10

\section{Introduction}

One of the most fundamental problems in optimization is the minimization
\[
\min_{x\in C}f(x)
\]
of a function $f:X\to\mathbb{R}$ over a certain space $X$ and a designated set of constraints $C\subseteq X$. Naturally, suitable restrictions have to be placed on the space and the function to make this problem feasible and the most classical formulation is the minimization of a convex and lower semicontinuous function over a Hilbert space. In that case, the most prominent method for approaching the corresponding minimization problem is the proximal point method originally devised in the work of Martinet \cite{Mar1970}, Rockafellar \cite{Roc1976} as well as Br\'ezis and Lions \cite{BL1978}, which constructs a next iterate by solving the proximal subproblem
\[
\mathrm{argmin}_{y\in C}\left\{f(y)+\frac{1}{2c_k}\norm{y-x^k}^2\right\}
\]
in terms of the current iterate $x^k$ and a regularization parameter $c_k>0$, which becomes tractable since $f(\cdot)+\frac{1}{2\beta}\norm{\cdot-x}^2$ is strongly convex for any point $x$ and $\beta>0$ if the function $f$ is convex.\\

When the assumption of convexity of $f$ is relaxed, already these proximal subproblems (and hence the proximal point method in general) become considerably harder and a great amount of research has been carried out to isolate certain classes of functions and spaces where the proximal point method remains effective. The most broad concrete class of functions we want to consider here is that of quasiconvex functions. This considerably wide class of (potentially nonconvex) functions however suffers from a range of problems in regards to the proximal point algorithm. As highlighted in \cite{Lar2022}, already on the real line quasiconvexity does not imply that the proximal subproblems are uniquely solvable (and in particular $f(\cdot)+\frac{1}{2\beta}\norm{\cdot-x}^2$ is not strongly quasiconvex for quasiconvex functions $f$). Also as highlighted in \cite{Lar2022}, the proximal point method is not really well-motivated as fixed points of the proximal map are not necessarily minimizers (and so the classical stopping criteria do not apply). As a result, while the proximal point method for quasiconvex functions has been heavily investigated (see e.g.\ \cite{BNCO2012,PC2007,Qui2024,QCM2020,QO2009,QO2012a,QO2012b,QMO2015}, among many more), essentially all convergence results rely on taking $c_k\to\infty$ which, as highlighted in \cite{Lar2022}, essentially reduces the proximal point method to a penalty method.\\

Starting at least with the recent work of Lara \cite{Lar2022}, an emphasis has thus been placed on strongly quasiconvex functions (which go back at least to the work of Polyak \cite{Pol1966}), a subclass of quasiconvex functions where (some of) these issues are avoided and one nevertheless encompasses interesting nonconvex (or at least not strongly convex) functions such as the Euclidean norm, which is strongly quasiconvex (as shown by Jovanovi\v{c} \cite{Jov1996}) without being strongly convex on any bounded convex set, and even the square root of the Euclidean norm which is strongly quasiconvex on each ball (as shown by Lara \cite{Lar2022}) without being differentiable (and which implies that e.g.\ $\max\{\sqrt{\norm{\cdot}},\norm{\cdot}^2-k\}$ for $k\in\mathbb{N}$ is strongly quasiconvex without even being convex as highlighted in \cite{Lar2022}). We refer to \cite{Lar2022} for further, very comprehensive, discussions on how this class is differentiated (or related) to various other classes of (non)convex functions. So, while some complications remain in this class, such as the proximal map not being single-valued, Lara has shown in \cite{Lar2022} that the natural analogue of the proximal point method, obtained by choosing some minimizer of said map at each stage, allows for a convergence result where the iteration converges to the unique minimizer of the function and the function values converge to the minimum, under the weak assumption that the regularization parameters $(c_k)$ are positively bounded below. This method and the corresponding convergence result has since then been extended in various directions by Lara and his co-authors (see e.g.\ \cite{GLM2023,GLM2024,IL2022}).\\

In this paper, we extend the convergence result for the proximal point method as established by Lara in \cite{Lar2022} to Hadamard spaces, that is complete geodesic metric spaces of nonpositive curvature, one of the most important classes of nonlinear hyperbolic spaces (see the next section for a more detailed introduction). On various subclasses of these spaces, the proximal point method for convex functions was previously investigated by a range of authors, culminating in the seminal work of Ba\v{c}\'ak \cite{Bac2013} who proved the weak convergence of the proximal point method in general Hadamard spaces (under a suitable notion of weak convergence, as will also be discussed later). We refer in particular to the comprehensive discussion therein for an excellent overview of the related literature. Previous work on proximal point methods for quasiconvex functions in nonlinear contexts is also rather abundant but, to our knowledge, largely limited to the setting of certain classes of Riemannian manifolds as mainly championed in the work of Papa Quiroz and his co-authors (see e.g.\ \cite{Qui2024,QCM2020,QO2012a,QO2012b} among many others) and further, in these works no special subclasses of quasiconvex functions are considered. In particular, as highlighted in the recent work \cite{Qui2024}, extensions of Lara's results from \cite{Lar2022} have so far not been developed in any kind of nonlinear context, a gap we try to fill with this paper.\\

Beyond however merely extending said results, we even provide a quantitative convergence result and, in particular, give very fast (ranging up to linear) rates of convergence for the sequence towards the solution and for the function values towards the minimum. These rates are moreover highly uniform and depend only on very few data relating to the iteration and the function. To our knowledge, these rates are novel even in the context of Lara's original setting of Euclidean spaces. We further want to highlight that our convergence proof is rather brief and elementary, relying only on a very small selection of key properties of strongly quasiconvex functions and their proximal maps that are generalized from the Euclidean context to the general setting of Hadamard spaces in this work as well. This elementarity of the convergence proof is key here for overcoming the obstacles of extending the results to general Hadamard spaces as Lara's proof given in \cite{Lar2022} relies on sequential compactness arguments in Euclidean spaces which are not available in general Hadamard spaces (and this similarly differentiates us from the setting of Riemannian manifolds which also are locally compact). In particular, it should be noted that our convergence result is ``strong'', i.e.\ is phrased not with a suitable notion of weak convergence but relative to the metric. This is not true for the usual proximal point method already for convex functions in Hilbert spaces as shown by G\"uler \cite{Gue1991} and is in this setting essentially due to the fact that strongly quasiconvex functions have a unique minimizer. In particular, our results thereby also extend Lara's results to infinite dimensional Hilbert spaces and show that the convergence in these spaces is strong as well. While not investigated here, it remains an interesting problem if similar, more elementary, proofs can be developed for the range of extensions of Lara's convergence result as surveyed above and if these then can be likewise transferred to nonlinear contexts as broad as Hadamard spaces, if appropriate.\\

All of the present results have been obtained using the methodology of proof mining, a program in mathematical logic that aims to classify and extract the computational content of prima facie ``non-computational'' proofs. We refer to the seminal monograph \cite{Koh2008} for a comprehensive overview of the subject and \cite{Koh2019} for a survey of more recent developments. However, while this logical perspective of proof mining was essential for deriving the results of this paper, we want to stress that the whole paper is presented without any explicit reference to logic and does not require any such background.\\

The rest of the paper is now organized as follows: Section \ref{sec:prelim} introduces the main necessary (but very minimal) background on Hadamard spaces. Section \ref{sec:prox} discusses strongly quasiconvex functions and their proximal maps in Hadamard spaces and establishes their key properties required for the main (quantitative) convergence results which are presented in Section \ref{sec:ppa}.

\section{Preliminaries}\label{sec:prelim}

All our results take place over various classes of nonlinear hyperbolic spaces, the most basic of which we introduce now:\\

A triple $(X, d, W)$ is called a hyperbolic space \cite{Koh2005} if $(X,d)$ is a metric space and $W : X \times X \times [0, 1] \to X$ is a function satisfying for all $x, y, z, w \in X$ and $\lambda, \lambda'\in [0,1]$:
\begin{enumerate}
\item[(W1)] $d(W(x,y,\lambda),z)\leq (1-\lambda)d(x,z) + \lambda d(y,z)$,
\item[(W2)] $d(W(x,y,\lambda), W(x,y,\lambda'))=|\lambda-\lambda'|d(x,y)$,
\item[(W3)] $W(x,y,\lambda)=W(y,x,1-\lambda)$,
\item[(W4)] $d(W(x,y,\lambda),W(z,w,\lambda))\leq (1-\lambda)d(x,z)+\lambda d(y,w)$.
\end{enumerate}
To ease notation in the following, we write $(1-\lambda) x \oplus \lambda y$ for the point $W(x, y, \lambda)$. We further call a set $C \subseteq X$ convex if $(1-\lambda) x \oplus \lambda y \in C$ for all $x,y\in C$ and $\lambda \in[0,1]$.\\

This class of hyperbolic spaces, which was originally introduced in \cite{Koh2005} as a logically motivated amalgamation of various notions of convex metric and hyperbolic spaces from the literature such as Takahashi's convex metric spaces \cite{Tak1970}, the hyperbolic spaces of Reich and Shafrir \cite{RS1990} as well as the spaces of hyperbolic type as introduced by Goebel and Kirk \cite{GK1983}, just to name a few, is commonly invoked as the appropriate nonlinear generalization of convexity from normed contexts and in particular allows for a nice direct approach to $\CAT$ and Hadamard spaces, the main classes of spaces that we are concerned with in this paper.\\

Concretely, in this paper, a $\CAT$ space is a hyperbolic space $(X,d,W)$ which satisfies
\[
d^2\left(\frac{1}{2} x\oplus \frac{1}{2}y,z\right)\leq \frac{1}{2} d^2(x,z)+\frac{1}{2}d^2(y,z)-\frac{1}{4}d^2(x,y)\tag{$\mathrm{CN}^-$}
\]
for all $x, y, z \in X$.\footnote{$\CAT$ spaces are often introduced as geodesic metric spaces satisfying the Bruhat-Tits $\mathrm{CN}$-inequality \cite{BT1972}. Any hyperbolic space that satisfies the $\mathrm{CN}$-inequality is hence a $\mathrm{CAT}(0)$-space and conversely, as every $\mathrm{CAT}(0)$-space is actually uniquely geodesic, any $\mathrm{CAT}(0)$-space is also a hyperbolic space by defining $W(x,y,\lambda)$ via the unique geodesic joining $x$ and $y$. Hence being a $\mathrm{CAT}(0)$-space in the usual sense is equivalent to the definition given here as, in the presence of (W1) -- (W4), the inequality $\mathrm{CN}^-$ is equivalent to the $\mathrm{CN}$-inequality. For more details, we refer to the discussions in \cite{GeK2008,Koh2005}.}\\

These spaces were originally introduced by Aleksandrov \cite{Ale1951} and were named $\CAT$ spaces by Gromov \cite{Gro1987}. A complete $\CAT$ space is called a Hadamard space. Examples of such spaces include Hilbert spaces, $\mathbb{R}$-trees and complete simply connected Riemannian manifolds of nonpositive sectional curvature, among many more. We refer to the seminal monograph \cite{BH1999} for a comprehensive overview of $\CAT$ and Hadamard spaces and further refer to \cite{Bac2014} for a shorter treatment focused on aspects of convex analysis and optimization.\\

We shortly note that the characterizing relation $\mathrm{CN}^-$ extends to arbitrary convex combinations:

\begin{lemma}[folklore, see e.g.\ \cite{DP2008}]
Let $(X,d,W)$ be a $\CAT$ space. Then
\[
d^2((1-\lambda) x\oplus \lambda y,z)\leq (1-\lambda) d^2(x,z)+\lambda d^2(y,z)-\lambda(1-\lambda)d^2(x,y)
\label{cn_ineq}\tag{$\mathrm{CN}^+$}
\]
for any $x,y,z\in X$ and $\lambda\in [0,1]$.
\end{lemma}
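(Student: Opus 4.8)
The plan is to establish the inequality first for all dyadic rationals $\lambda = k/2^n \in [0,1]$ by induction on $n$, and then to pass to arbitrary $\lambda \in [0,1]$ by density and continuity. For $n = 1$ the only nontrivial value is $\lambda = 1/2$, where $(\mathrm{CN}^+)$ is exactly the hypothesis $(\mathrm{CN}^-)$; the cases $\lambda \in \{0,1\}$ are immediate since then both sides reduce to $d^2(x,z)$ or $d^2(y,z)$ and the last term vanishes.

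For the inductive step, suppose $(\mathrm{CN}^+)$ holds for all dyadic rationals of level at most $n$ and let $\lambda = k/2^{n+1}$ with $k$ odd (even $k$ reduces to level $n$). I would write $\lambda$ as the midpoint $\lambda = \tfrac12(a+b)$ of $a = (k-1)/2^{n+1}$ and $b = (k+1)/2^{n+1}$, both of which are dyadic of level at most $n$, and set $p = (1-a)x \oplus ay$ and $q = (1-b)x \oplus by$. The crucial geometric input is the reparametrization identity $\tfrac12 p \oplus \tfrac12 q = (1-\lambda)x \oplus \lambda y$: since every $\CAT$ space is uniquely geodesic (see the footnote above), the points $p$, $q$ and $(1-\lambda)x\oplus\lambda y$ all lie on the single geodesic from $x$ to $y$, and by (W2) the midpoint of $p$ and $q$ is precisely the point at the averaged parameter $\lambda$. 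Applying $(\mathrm{CN}^-)$ to $p, q, z$ then gives
\[
d^2((1-\lambda)x\oplus\lambda y,z) \leq \tfrac12 d^2(p,z) + \tfrac12 d^2(q,z) - \tfrac14 d^2(p,q),
\]
where $d^2(p,q) = (b-a)^2 d^2(x,y)$ by (W2).

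Bounding $d^2(p,z)$ and $d^2(q,z)$ by the induction hypothesis at parameters $a$ and $b$ and collecting terms, the coefficients of $d^2(x,z)$ and $d^2(y,z)$ combine to $1-\lambda$ and $\lambda$ respectively, while the coefficient of $d^2(x,y)$ becomes $-\bigl(\tfrac12[a(1-a)+b(1-b)] + \tfrac14(b-a)^2\bigr)$. A direct computation shows this equals $-\lambda(1-\lambda)$ for $\lambda = \tfrac12(a+b)$, so the inductive step closes with exactly the coefficient required.

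Finally, for fixed $x, y, z$ both sides of $(\mathrm{CN}^+)$ are continuous in $\lambda$: the right-hand side is a quadratic polynomial in $\lambda$, while the left-hand side is continuous because $\lambda \mapsto (1-\lambda)x\oplus\lambda y$ is (Lipschitz) continuous by (W2) and $d(\cdot,z)^2$ is continuous. As the dyadic rationals are dense in $[0,1]$, the inequality extends from them to all $\lambda \in [0,1]$. The only genuinely non-formal point in this scheme is the reparametrization identity in the inductive step, which I expect to be the main obstacle; it follows cleanly from unique geodesy in $\CAT$ spaces (and could alternatively be derived directly from (W1)--(W4)), whereas everything else is routine bookkeeping.
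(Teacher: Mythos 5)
Your proof is correct. Note that the paper itself offers no proof of this lemma --- it is cited as folklore with a pointer to \cite{DP2008}, where the standard argument runs through the Euclidean comparison triangle: in $\mathbb{R}^2$ the inequality $(\mathrm{CN}^+)$ holds with equality for the comparison points, and the $\CAT$ comparison property transfers it to $X$. Your dyadic induction plus density argument is a genuinely different route, and arguably the more natural one given the paper's axiomatization of $\CAT$ spaces as hyperbolic spaces satisfying $(\mathrm{CN}^-)$ rather than via comparison triangles: everything you use is (W1)--(W4), the midpoint inequality, and unique geodesy. Your coefficient computation checks out: with $\lambda=\tfrac12(a+b)$ one has $\tfrac12[a(1-a)+b(1-b)]+\tfrac14(b-a)^2 = \tfrac{a+b}{2}-\tfrac{(a+b)^2}{4} = \lambda(1-\lambda)$, exactly as required. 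You are also right to single out the reparametrization identity $\tfrac12 p\oplus\tfrac12 q=(1-\lambda)x\oplus\lambda y$ as the one non-formal step; your justification via unique geodesy is sound, since $(\mathrm{CN}^-)$ applied with $z$ an arbitrary metric midpoint of $x$ and $y$ forces that midpoint to equal $\tfrac12 x\oplus\tfrac12 y$, whence midpoints (and then geodesics) are unique and $W$ parametrizes them. I would, however, drop the parenthetical claim that this identity ``could alternatively be derived directly from (W1)--(W4)'': those axioms alone do not force the $W$-segment between two points of a geodesic to lie on that geodesic, so the appeal to $(\mathrm{CN}^-)$ via unique geodesy is genuinely needed there.
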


Crucially, we also rely on another equivalent characterization of $\CAT$ spaces as developed in the work of Berg and Nikolaev \cite{BN2008}. Concretely, define the so-called quasi-linearization function by
\[
\ql{xy}{uv}:=\frac{1}{2}\left( d^2(x,v) + d^2(y,u) - d^2(x,u)-d^2(y,v) \right)
\]
for all $x,y,u,v\in X$, using $\overrightarrow{xy}$ to denote pairs $(x,y)\in X^2$. As shown in \cite{BN2008}, this function is the unique function $X^2\times X^2\to\mathbb{R}$ in any metric space $(X,d)$ such that
\begin{enumerate}
\item[(1)] $\ql{xy}{xy} = d^2(x, y)$, 
\item[(2)] $\ql{xy}{uv}=\ql{uv}{xy}$,
\item[(3)] $\ql{xy}{uv}=-\ql{yx}{uv}$,
\item[(4)] $\ql{xy}{uv}+\ql{xy}{vw}=\ql{xy}{uw}$.
\end{enumerate}
for all $x,y,u,v,w\in X$. It then follows from the results in \cite{BN2008} that a hyperbolic space $(X,d,W)$ is a $\CAT$ space if, and only if,
\[
\ql{xy}{uv}\leq d(x,y)d(u,v)\tag{CS}\label{CS}
\]
for all $x,y,u,v\in X$, i.e.\ where a metric version of the Cauchy-Schwarz inequality holds. By these properties, this function can be seen as a nonlinear generalization of the inner product in Hilbert spaces. Beyond these properties, we will also rely on the following equality (which is immediate from the definition of the quasi-linearization function):

\begin{lemma}\label{lem:quasiLinLem}
For any metric space $(X,d)$ and $x,y,z\in X$:
\[
d^2(x,y)=d^2(x,z) + d^2(z,y) + 2\ql{xz}{zy}.
\]
\end{lemma}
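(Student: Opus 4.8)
The plan is to prove the identity by directly unwinding the definition of the quasi-linearization function; no geometric input beyond the metric axioms is needed, so the only real task is the careful bookkeeping of which points occupy which slot. Recall that the defining formula reads $\ql{ab}{cd} = \frac{1}{2}\left(d^2(a,d) + d^2(b,c) - d^2(a,c) - d^2(b,d)\right)$ (writing the four arguments as $a,b,c,d$ in place of the $x,y,u,v$ of the definition). To evaluate $\ql{xz}{zy}$ I would therefore read off the arguments as $a=x$, $b=z$, $c=z$ and $d=y$.

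Substituting these into the definition gives
\[
\ql{xz}{zy} = \frac{1}{2}\left(d^2(x,y) + d^2(z,z) - d^2(x,z) - d^2(z,y)\right).
\]
Using $d(z,z) = 0$, this collapses to $\ql{xz}{zy} = \frac{1}{2}\left(d^2(x,y) - d^2(x,z) - d^2(z,y)\right)$. Multiplying through by $2$ and rearranging then yields precisely $d^2(x,y) = d^2(x,z) + d^2(z,y) + 2\ql{xz}{zy}$, which is the claim.

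Since the argument is a one-line substitution, there is essentially no obstacle beyond correctly matching the points $x,z,z,y$ to the template of the definition; the only point worth double-checking is that the repeated appearance of $z$ (in the second slot of the first pair and the first slot of the second pair) produces the term $d^2(z,z)$, which vanishes and thereby leaves exactly the three remaining squared distances. Alternatively, one could derive the identity purely from the characterizing properties (1)--(4) of the quasi-linearization function, for instance by expanding $\ql{xz}{zy}$ through the additivity property (4) and simplifying via (1)--(3), but the direct computation is both shorter and makes the cancellation of the $d^2(z,z)$ term fully transparent, so I would present that route.
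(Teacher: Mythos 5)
Your proposal is correct and matches the paper's intent exactly: the paper gives no written proof, remarking only that the identity ``is immediate from the definition of the quasi-linearization function,'' and your direct substitution with $a=x$, $b=z$, $c=z$, $d=y$ and the vanishing $d^2(z,z)$ term is precisely that one-line verification.
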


\section{(Strongly) quasiconvex functions on Hadamard spaces}\label{sec:prox}

For the rest of this paper, unless said otherwise, we let $(X,d,W)$ be a Hadamard space, i.e.\ a complete $\CAT$ space. We begin with our main class of functions: A function $f:X\to \mathbb{R}$ is called
\begin{enumerate}
\item \emph{quasiconvex} if
\[
f((1-\lambda)x\oplus \lambda y)\leq\max\{f(x),f(y)\}
\]
for all $x,y\in X$ and $\lambda\in [0,1]$,
\item \emph{strongly quasiconvex} with value $\gamma>0$ if
\[
f((1-\lambda)x\oplus \lambda y)\leq\max\{f(x),f(y)\}-\lambda(1-\lambda)\frac{\gamma}{2}d^2(x,y)
\]
for all $x,y\in K$ and $\lambda\in [0,1]$.
\end{enumerate}

For the rest of this paper, we are interested in solving the problem $\min_{x\in X}f(x)$ for a strongly quasiconvex function $f$. We write $\mathrm{argmin}f$ for the set of minimizers of $f$. Crucially, we first note that for such functions the minimizer is unique:

\begin{lemma}\label{lem:minUnique}
Let $f:X\to\mathbb{R}$ be a strongly quasiconvex function with value $\gamma>0$. Then $\mathrm{argmin}f$ is at most a singleton.
\end{lemma}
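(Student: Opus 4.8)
The plan is to argue by contradiction. Suppose $\mathrm{argmin}f$ contained two distinct points $x,y\in X$, both attaining the minimum value $m:=\min f$, so that $f(x)=f(y)=m$ and, since $x\neq y$, we have $d(x,y)>0$. The key idea is that strong quasiconvexity forces the value at any strict convex combination of two minimizers to drop \emph{strictly} below $m$, which is impossible.

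Concretely, I would apply the defining inequality for strong quasiconvexity with $\lambda=\tfrac{1}{2}$ to the two points $x$ and $y$, yielding
\[
f\left(\tfrac{1}{2}x\oplus\tfrac{1}{2}y\right)\leq\max\{f(x),f(y)\}-\tfrac{1}{2}\cdot\tfrac{1}{2}\cdot\tfrac{\gamma}{2}d^2(x,y)=m-\tfrac{\gamma}{8}d^2(x,y).
\]
Since $\gamma>0$ and $d(x,y)>0$, the subtracted term is strictly positive, so $f\left(\tfrac{1}{2}x\oplus\tfrac{1}{2}y\right)<m$. This contradicts the fact that $m$ is the minimal value of $f$ over $X$, and hence no two distinct minimizers can exist; that is, $\mathrm{argmin}f$ is at most a singleton.

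This argument is entirely elementary and I do not anticipate a genuine obstacle; the only points warranting a line of care are that the midpoint $\tfrac{1}{2}x\oplus\tfrac{1}{2}y$ is well-defined (which holds since $W$ is a total function on the hyperbolic space $X$) and that $x\neq y$ indeed gives $d(x,y)>0$ by the metric axioms. Any fixed $\lambda\in(0,1)$ would work equally well in place of $\tfrac{1}{2}$; the choice $\lambda=\tfrac{1}{2}$ is merely the cleanest. Notably, the proof uses only the defining inequality of strong quasiconvexity and none of the finer structure of Hadamard spaces, so it transfers verbatim to the more general hyperbolic setting.
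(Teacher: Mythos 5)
Your proposal is correct and follows essentially the same argument as the paper: both apply the strong quasiconvexity inequality at the midpoint $\frac{1}{2}x\oplus\frac{1}{2}y$ of two minimizers to obtain $f(\frac{1}{2}x\oplus\frac{1}{2}y)\leq \min f-\frac{\gamma}{8}d^2(x,y)$ and conclude $d(x,y)=0$. The only (cosmetic) difference is that the paper phrases the conclusion directly as $d^2(x,y)\leq 0$ rather than as a contradiction with $x\neq y$.
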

\begin{proof}
If $x,y$ are two minimizers, define $z=1/2x\oplus 1/2y$. As $f$ is strongly quasiconvex, we get
\[
f(z)\leq\max\{f(x),f(y)\}-\frac{\gamma}{8}d^2(x,y)=\min f-\frac{\gamma}{8}d^2(x,y)
\]
and so $d^2(x,y)\leq (\min f - f(z))\cdot 8/\gamma \leq 0$, i.e.\ $x=y$.
\end{proof}

\begin{remark}
While all properties and results on quasiconvex functions given in \cite{Lar2022} are formulated relative to a closed and convex subset of the domain $\mathrm{dom}f$ of a function $f:\mathbb{R}^n\to\overline{\mathbb{R}}$, we omit these restrictions in this paper as any closed and convex set $C\subseteq X$ of a Hadamard space $X$ is naturally a Hadamard space itself and so we would not gain any further generality from considering closed and convex subsets of the domain of a function with extended real values.
\end{remark}

The main associated object with such (strongly) quasiconvex functions $f$ that we employ for approximating minimizers thereof is the proximal map (also sometimes called the Moreau-Yosida resolvent)
\[
\mathrm{Prox}_{\beta f}(x):=\mathrm{argmin}_{y\in X}\left\{f(y)+\frac{1}{2\beta}d^2(x,y)\right\}
\]
for $\beta>0$ and $x\in X$, used in a metric context without linear structure for the first time in \cite{Jos1995}.\\

If the function $f$ is lower semincontinuous (lsc), i.e.\
\[
\liminf_{n\to\infty} f(x_n)\geq f(x)
\]
for any $x\in X$ and any sequence $(x_n)$ such that $x_n\to x$, and additionally convex, then $\mathrm{Prox}_{\beta f}(x)$ is always a singleton (see e.g.\ \cite{Jos1995}). Now, in the quasiconvex case, the proximal map is in general not single valued already in Euclidean contexts (see e.g.\ the discussion in \cite{Lar2022}) but at least always non-empty (which in \cite{Lar2022} is reduced to the lower semicontinuity and supercoercivity of the object function of the proximal map and hence relies on a compactness argument).

Before moving on to the key properties of the proximal map on which our convergence proof relies, we now first show that the same is true in Hadamard spaces for quasiconvex lsc functions $f$. The respective argument relies similarly on a suitable notion of supercoercivity in metric spaces together with a weak compactness argument and we first introduce the necessary ingredients for this.\\

A function $f:X\to\mathbb{R}$ is called supercoercive at $x\in X$ if
\[
\lim_{d(x,y)\to \infty}\frac{f(y)}{d(x,y)}=\infty.
\]
A similar notion on Hadamard manifolds appears in \cite{QO2009} under the name of 1-coercivity. Clearly, if $f$ is bounded below, then the function $f(y)+\frac{1}{2\beta}d^2(x,y)$ is supercoercive at $x$. Crucially, we have the following result:

\begin{lemma}[folklore]\label{lem:supercoBound}
If $f:X\to\mathbb{R}$ is bounded below and supercoercive at $x\in X$, then any sequence $(y_n)$ with $f(y_n)\to\inf f$ is bounded.
\end{lemma}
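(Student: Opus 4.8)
The plan is to extract a single explicit bound directly from the definition of supercoercivity, avoiding any appeal to subsequences or to a proof by contradiction. First I would record the two basic consequences of the hypotheses on the values: since $f$ is bounded below, $\inf f$ is a finite real number, and since $f(y_n)\to\inf f$ the sequence $(f(y_n))$ is in particular convergent, hence bounded above, say $f(y_n)\le C$ for all $n$ and some $C\in\mathbb{R}$.

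Next I would carefully unpack supercoercivity of $f$ at $x$. The statement $\lim_{d(x,y)\to\infty}f(y)/d(x,y)=\infty$ means precisely that for every $M>0$ there is an $R>0$ with $f(y)>M\cdot d(x,y)$ whenever $d(x,y)>R$. The key step is to instantiate this with the single choice $M=1$, which produces an $R>0$ such that $d(x,y)>R$ forces the clean inequality $d(x,y)<f(y)$.

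It then remains to combine the two facts to bound each iterate. Fixing $n$, if $d(x,y_n)\le R$ there is nothing to prove; otherwise $d(x,y_n)>R$, and the displayed implication together with the value bound gives $d(x,y_n)<f(y_n)\le C$. In either case $d(x,y_n)\le\max\{R,C\}$, so the sequence $(y_n)$ lies in the closed ball of radius $\max\{R,C\}$ about $x$ and is therefore bounded.

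The argument contains no genuine obstacle; the only point requiring care is the correct reading of the supercoercivity limit as the uniform ``for all $M$ there exists $R$'' statement, combined with the harmless remark that a convergent real sequence is bounded above. I would also emphasize that the single $R$ obtained from $M=1$ bounds every iterate simultaneously, so the conclusion is fully effective: the explicit quantity $\max\{R,C\}$ furnishes a bound on $\sup_n d(x,y_n)$ depending only on the data $R$ and $C$.
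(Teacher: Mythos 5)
Your proof is correct, but it takes a genuinely different route from the paper. The paper argues by contradiction: assuming $(y_n)$ unbounded, it passes to the limit $d(x,y_n)\to\infty$ and plays the two limits $f(y_n)/d(x,y_n)\to\infty$ (from supercoercivity) and $f(y_n)/d(x,y_n)\to 0$ (from $f(y_n)\to\inf f$) against each other. You instead extract an explicit bound directly: instantiating supercoercivity at $M=1$ gives an $R$ with $d(x,y)<f(y)$ whenever $d(x,y)>R$, and combining this with an upper bound $C$ on the convergent sequence $(f(y_n))$ yields $d(x,y_n)\le\max\{R,C\}$ for every $n$. Your version is not only correct but arguably preferable in the context of this paper: it is fully effective (producing the bound $\max\{R,C\}$ from a modulus of supercoercivity and a bound on the function values), which fits the paper's proof-mining theme, and it sidesteps a small imprecision in the paper's own argument, namely that an unbounded sequence only guarantees $d(x,y_n)\to\infty$ along a subsequence, so the paper's contradiction strictly speaking should be run on that subsequence. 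The only point to watch in your write-up is the degenerate possibility that $C<0$, but then the case $d(x,y_n)>R$ simply cannot occur and the bound $\max\{R,C\}=R$ still holds, so nothing breaks.
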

\begin{proof}
Suppose not, i.e.\ let $(y_n)$ be an unbounded sequence such that $f(y_n)\to\inf f$. Then $d(x,y_n)\to\infty$ and so $f(y_n)/d(x,y_n)\to \infty$ as $n\to\infty$ since $f$ is supercoercive at $x$ but $f(y_n)/d(x,y_n)\to 0$ as $n\to\infty$ since $f(y_n)\to\inf f$, a contradiction.
\end{proof}

Weak convergence in $\CAT$ spaces goes back to the work of Jost \cite{Jos1994} and is often called $\Delta$-convergence, as it can be equivalently described via an extension of Lim's notion of $\Delta$-convergence \cite{Lim1976} to $\CAT$ spaces developed by Kirk and Panyanak \cite{KP2008} (as shown by Esp\'inola and Fern\'andez-Le\'on \cite{EFL2009}, see also the discussion in \cite{Bac2013}).

Concretely, define the asymptotic radius of a sequence $(x_n)\subseteq X$ at a point $x\in X$ via
\[
r(x_n,x):=\limsup_{n\to\infty}d(x_n,x)
\]
and define the general asymptotic radius of $(x_n)$ by
\[
r(x_n):=\inf_{x\in X}r(x_n,x).
\]
A point $x\in X$ is called an asymptotic center of $(x_n)$ if 
\[
r(x_n,x)=r(x_n)
\]
and in Hadamard spaces, asymptotic centers exist and are unique (see e.g.\ \cite{DKS2006}). A sequence $(x_n)\subseteq X$ is said to weakly converge to $x\in X$ if $x$ is the asymptotic center of each subsequence of $(x_n)$ and we write $x_n\to^w x$ in that case. A point $x\in X$ is a weak cluster point of a sequence $(x_n)$ if there is a subsequence $(x_{n_k})$ of $(x_n)$ with $x_{n_k}\to^w x$. Crucially, we rely on the following result:

\begin{lemma}[\cite{Jos1994}, see also \cite{KP2008}]\label{lem:weakComp}
Any bounded sequence in $X$ has a weak cluster point.
\end{lemma}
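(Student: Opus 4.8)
The plan is to reduce the statement to two classical ingredients and to use throughout the already-quoted fact that in a Hadamard space the asymptotic center of a bounded sequence exists and is unique, together with boundedness of $(x_n)$ guaranteeing that all relevant asymptotic radii are finite. The first ingredient is that $(x_n)$ admits a subsequence whose asymptotic radius is stable under all further passages to subsequences (a \emph{regular} subsequence); the second is that any such regular subsequence already weakly converges to its asymptotic center, which then serves as the desired weak cluster point.

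For the first ingredient, let $(x_n)$ be bounded and set $r:=\inf\{r(u_n)\mid (u_n)\text{ is a subsequence of }(x_n)\}$, which is finite. I would build a nested family of subsequences by the standard diagonal argument: pick a subsequence $(x^{(1)}_n)$ with $r(x^{(1)}_n)<r+1$, then inductively a subsequence $(x^{(k)}_n)$ of $(x^{(k-1)}_n)$ with $r(x^{(k)}_n)<r+1/k$, and finally take the diagonal $y_k:=x^{(k)}_k$. The estimates rest on three elementary observations: passing to a subsequence can only decrease the asymptotic radius (as $\limsup$ can only decrease and the infimum over base points respects this), the asymptotic radius is insensitive to finite initial segments, and for every $j$ the tail $(y_k)_{k\ge j}$ is a subsequence of $(x^{(j)}_n)$. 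Hence every subsequence of $(y_k)$ has asymptotic radius $\le r+1/j$ for all $j$, so $\le r$, while being $\ge r$ by the definition of $r$; therefore $(y_k)$ is regular with $r(y_k)=r$.

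For the second ingredient, I claim the regular sequence $(y_k)$ weakly converges to its unique asymptotic center $x$. Let $(y_{k_i})$ be an arbitrary subsequence, with asymptotic center $y$; I must show $y=x$. Setting $m:=\frac12 x\oplus\frac12 y$, applying $(\mathrm{CN}^-)$ to the points $x,y,y_{k_i}$, and taking $\limsup_i$ (using that $\limsup$ of a sum is bounded by the sum of the $\limsup$s and that squaring commutes with $\limsup$ for nonnegative terms) yields
\[
r^2(y_{k_i},m)\le \frac12 r^2(y_{k_i},x)+\frac12 r^2(y_{k_i},y)-\frac14 d^2(x,y).
\]
Here $r(y_{k_i},y)=r(y_{k_i})=r$ because $y$ is the asymptotic center of $(y_{k_i})$ and $(y_k)$ is regular, $r(y_{k_i},x)\le r(y_k,x)=r$ because $x$ is the asymptotic center of $(y_k)$ and subsequences shrink $\limsup$, and $r(y_{k_i},m)\ge r(y_{k_i})=r$ by the definition of the general asymptotic radius together with regularity. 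Substituting these bounds forces $r^2\le r^2-\frac14 d^2(x,y)$, hence $d^2(x,y)\le 0$ and $y=x$. Thus $x$ is the asymptotic center of every subsequence of $(y_k)$, which is exactly $y_k\to^w x$, so $x$ is a weak cluster point of $(x_n)$.

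The main obstacle I anticipate is purely the bookkeeping in the diagonal extraction of a regular subsequence: one must check carefully that the asymptotic radius of an \emph{arbitrary} subsequence of the diagonal is squeezed between $r$ and every $r+1/j$, which requires the radius being blind to finite prefixes and only decreasing under sub-sampling, as well as the strict monotonicity of the diagonal indices inside each $(x^{(j)}_n)$. By comparison the second part is clean, being a one-line convexity estimate from $(\mathrm{CN}^-)$ closed off by the uniqueness of asymptotic centers.
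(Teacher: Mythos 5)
Your proof is correct. Note, however, that the paper does not prove this lemma at all: it is stated as a black box with a citation to Jost and to Kirk--Panyanak, so there is no in-paper argument to compare against. What you have written is essentially the classical proof from those sources: a diagonal extraction of a \emph{regular} subsequence (one whose asymptotic radius is invariant under further subsequences), followed by the observation that a regular sequence weakly converges to its asymptotic center, which you derive cleanly from $(\mathrm{CN}^-)$ and the uniqueness of asymptotic centers. All three elementary facts you invoke about the asymptotic radius (monotonicity under sub-sampling, insensitivity to finite prefixes, and the squeeze $r\le r(z_m)\le r+1/j$ for subsequences of the diagonal) do hold, and the final estimate $r^2\le r^2-\frac{1}{4}d^2(x,y)$ correctly forces $x=y$. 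The only ingredient you take for granted is the existence and uniqueness of asymptotic centers of bounded sequences in Hadamard spaces, which the paper itself quotes with a reference, so this is a legitimate starting point. In short: a correct, self-contained proof of a result the paper merely cites.
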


A function $f:X\to\mathbb{R}$ is now called weakly lower semicontinuous (weakly lsc) at a point $x\in X$ if
\[
\liminf_{n\to\infty} f(x_n)\geq f(x)
\]
whenever $(x_n)$ is a sequence such that $x_n\to^w x$. Crucially, if $f:X\to\mathbb{R}$ is a convex lsc function on a Hadamard space, it is also weakly lsc (see \cite{Bac2013}) and this result rather immediately extends to quasiconvex functions. For the proof, we in particular need the following result that closed convex subsets of Hadamard spaces are weakly sequentially closed:

\begin{lemma}[\cite{BSS2012}]\label{lem:weaklyClosed}
Let $C\subseteq X$ be a closed convex set. If $(x_n)\subseteq C$ and $x_n\to^w x$ for some $x\in X$, then $x\in C$.
\end{lemma}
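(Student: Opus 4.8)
The plan is to combine the metric projection onto $C$ with the defining minimality property of the asymptotic center. Recall that in a Hadamard space every nonempty closed convex set $C$ admits a well-defined nearest-point projection: there is a unique $p:=P_C x\in C$ with $d(x,p)=\inf_{y\in C}d(x,y)$, existence and uniqueness following from completeness together with the strict convexity of the metric encoded by $\mathrm{CN}^+$ (the same machinery that underlies the existence and uniqueness of asymptotic centers invoked above). The goal is then to prove $p=x$, which immediately gives $x\in C$.

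The first step is the projection (variational) inequality
\[
d^2(x,y)\geq d^2(x,p)+d^2(p,y)\qquad\text{for all }y\in C.
\]
I would obtain this directly from $\mathrm{CN}^+$: for fixed $y\in C$ and $t\in(0,1]$, convexity of $C$ gives $(1-t)p\oplus ty\in C$, whence minimality of $p$ yields $d^2(x,p)\leq d^2(x,(1-t)p\oplus ty)$; estimating the right-hand side by $\mathrm{CN}^+$, cancelling the common factor $t$ and letting $t\to 0$ produces the displayed inequality.

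The second step feeds the sequence into this inequality. Since each $x_n\in C$ we have $d^2(x,x_n)\geq d^2(x,p)+d^2(p,x_n)$, and as $(x_n)$ is bounded (being weakly convergent, so that all asymptotic radii are finite) we may take $\limsup_n$; because $d^2(x,p)$ is constant this gives $r^2(x_n,x)\geq d^2(x,p)+r^2(x_n,p)$. On the other hand, $x_n\to^w x$ means precisely that $x$ is the asymptotic center of $(x_n)$, so $x$ minimizes the asymptotic radius and in particular $r(x_n,x)\leq r(x_n,p)$, i.e.\ $r^2(x_n,x)\leq r^2(x_n,p)$. Chaining the two estimates forces $d^2(x,p)\leq 0$, hence $x=p\in C$.

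The genuine obstacle is the infrastructure rather than the final lines: one must secure existence and uniqueness of $P_C$ in a Hadamard space that need not be locally compact, which is where completeness and $\mathrm{CN}^+$ (via a Cauchy argument on a minimizing sequence for $\inf_{y\in C}d(x,y)$) do the work. Granting projections, the remainder is the two short steps above, the key being the transfer from the pointwise inequality to the asymptotic radii through $\limsup$ and the minimality of the asymptotic center.
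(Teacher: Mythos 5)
The paper does not prove this lemma at all --- it is quoted directly from \cite{BSS2012} as a black box --- so there is no in-paper argument to measure yours against. Your proof is correct and self-contained, and it is in fact essentially the standard argument for this result in the Hadamard-space literature: existence and uniqueness of the metric projection $P_C$ via a Cauchy minimizing sequence (using completeness and $\mathrm{CN}^-$), the variational inequality $d^2(x,y)\geq d^2(x,p)+d^2(p,y)$ for $y\in C$ obtained from $\mathrm{CN}^+$ by perturbing $p$ towards $y$ and letting $t\to 0$, and then the comparison of asymptotic radii $r^2(x_n,x)\geq d^2(x,p)+r^2(x_n,p)$ against the minimality $r(x_n,x)\leq r(x_n,p)$ coming from $x$ being the asymptotic center. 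All the individual steps check out: the $\limsup$ passage is legitimate because $d^2(x,p)$ is a constant, and $r^2(x_n,\cdot)=\limsup_n d^2(x_n,\cdot)$ since squaring is continuous and monotone on nonnegative reals. The only point worth tightening is the parenthetical claim that weak convergence forces boundedness ``so that all asymptotic radii are finite'': asymptotic centers (and their uniqueness, which the paper invokes) are really only meaningful for bounded sequences, so either one restricts the lemma to bounded sequences from the outset, or one observes that an unbounded sequence has a subsequence whose asymptotic radius function is identically $+\infty$, which is incompatible with $x$ being \emph{the} asymptotic center of every subsequence. In the two places the paper actually uses this lemma the sequences are bounded anyway, so this is a cosmetic rather than substantive gap.
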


We then get the following result (which is a simple extension of Lemma 3.1 in \cite{Bac2013}):

\begin{lemma}\label{lem:lscWeak}
Let $f:X\to\mathbb{R}$ be a quasiconvex lsc function. Then $f$ is weakly lsc.
\end{lemma}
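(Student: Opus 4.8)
The plan is to exploit the characterization of quasiconvexity through sublevel sets together with the weak sequential closedness of closed convex sets provided by Lemma \ref{lem:weaklyClosed}. For $\alpha\in\mathbb{R}$, consider the sublevel set $S_\alpha:=\{y\in X: f(y)\leq\alpha\}$. Quasiconvexity of $f$ immediately gives that each $S_\alpha$ is convex, since $f((1-\lambda)x\oplus\lambda y)\leq\max\{f(x),f(y)\}\leq\alpha$ whenever $x,y\in S_\alpha$, while lower semicontinuity guarantees that each $S_\alpha$ is closed. Thus, by Lemma \ref{lem:weaklyClosed}, each $S_\alpha$ is weakly sequentially closed, and this is the only place where the hypotheses on $f$ enter.

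Now let $(x_n)$ be a sequence with $x_n\to^w x$ and set $\alpha:=\liminf_{n\to\infty}f(x_n)$; the goal is to show $f(x)\leq\alpha$. First I would pass to a subsequence $(x_{n_k})$ with $f(x_{n_k})\to\alpha$; since any subsequence of a weakly convergent sequence shares the same weak limit (each of its subsequences is again a subsequence of $(x_n)$ and thus has $x$ as asymptotic center), we still have $x_{n_k}\to^w x$. Fixing $\varepsilon>0$, for all sufficiently large $k$ we have $f(x_{n_k})\leq\alpha+\varepsilon$, so the tail of $(x_{n_k})$ eventually lies in $S_{\alpha+\varepsilon}$. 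As discarding finitely many initial terms affects neither the asymptotic radii nor the asymptotic center, this tail still weakly converges to $x$, whence the weak sequential closedness of $S_{\alpha+\varepsilon}$ forces $x\in S_{\alpha+\varepsilon}$, i.e.\ $f(x)\leq\alpha+\varepsilon$. Letting $\varepsilon\to 0$ then yields $f(x)\leq\alpha$, which is precisely weak lower semicontinuity at $x$.

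The argument is genuinely elementary once the sublevel sets are in hand, and the only point requiring care is the bookkeeping around weak convergence: one must verify that both passing to the subsequence along which $f$ approaches its $\liminf$ and subsequently truncating to the tail contained in $S_{\alpha+\varepsilon}$ preserve weak convergence to $x$. This is immediate from the definition of weak convergence via asymptotic centers, since every (sub)sequence obtained from $(x_n)$ in this way has $x$ as the asymptotic center of each of its own subsequences. I expect no serious obstacle beyond this bookkeeping; the heart of the matter is simply that quasiconvexity suffices, in place of full convexity, to make the relevant sublevel sets convex and hence weakly sequentially closed, so that the proof of Lemma 3.1 in \cite{Bac2013} goes through essentially verbatim.
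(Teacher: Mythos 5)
Your proof is correct and follows essentially the same route as the paper: both arguments reduce weak lower semicontinuity to the weak sequential closedness of closed convex sets (Lemma \ref{lem:weaklyClosed}) applied to a set that contains a tail of the weakly convergent (sub)sequence. The only cosmetic difference is that you phrase this via the sublevel sets $S_{\alpha+\varepsilon}$ directly, whereas the paper argues by contradiction and uses the closed convex hull of $\{x_{n_k}\mid k\geq k_0\}$, which is contained in the corresponding sublevel set anyway.
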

\begin{proof}
Suppose for a contradiction that there is a sequence $(x_n)\subseteq X$ and $x\in X$ with $x_n\to^w x$ and
\[
\liminf_{n\to\infty} f(x_n)<f(x),
\]
i.e.\ there exists a subsequence $(x_{n_k})$, a $k_0\in\mathbb{N}$ and an $\varepsilon>0$ with $f(x_{n_k})\leq f(x)-\varepsilon$ for all $k\geq k_0$. By quasiconvexity of $f$, we get
\[
f(y)\leq f(x)-\varepsilon
\]
for all $y\in\mathrm{co}\{x_{n_k}\mid k\geq k_0\}$, where $\mathrm{co}\{x_{n_k}\mid k\geq k_0\}$ is the convex hull of $\{x_{n_k}\mid k\geq k_0\}$. By lower semicontinuity of $f$, we have the same for $y\in \overline{\mathrm{co}}\{x_{n_k}\mid k\geq k_0\}$, the closed convex hull of the respective set. But by Lemma \ref{lem:weaklyClosed}, we have $x\in \overline{\mathrm{co}}\{x_{n_k}\mid k\geq k_0\}$ which yields a contradiction.
\end{proof}

This allows us to show the following result:

\begin{lemma}\label{lem:proxNE}
Let $f:X\to\mathbb{R}$ be a quasiconvex lsc function which is bounded below. Then for any $x\in X$ and $\beta>0$, $\mathrm{Prox}_{\beta f}(x)$ is nonempty.
\end{lemma}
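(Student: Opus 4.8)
The plan is to prove nonemptiness of $\mathrm{Prox}_{\beta f}(x)$ by taking a minimizing sequence for the object function $g(y):=f(y)+\frac{1}{2\beta}d^2(x,y)$, extracting a weakly convergent subsequence via the weak compactness result, and showing that the weak limit is an actual minimizer by invoking weak lower semicontinuity. First I would fix $x\in X$ and $\beta>0$ and set $m:=\inf_{y\in X}g(y)$, noting that $m>-\infty$ since $f$ is bounded below and $\frac{1}{2\beta}d^2(x,y)\geq 0$, so $g$ is bounded below as well. Choose a sequence $(y_n)$ with $g(y_n)\to m$.

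Next I would establish that $(y_n)$ is bounded. Since $f$ is bounded below, the remark preceding Lemma~\ref{lem:supercoBound} gives that $g(y)=f(y)+\frac{1}{2\beta}d^2(x,y)$ is supercoercive at $x$; moreover $g$ is itself bounded below. Hence Lemma~\ref{lem:supercoBound}, applied to $g$ in place of $f$, yields that any sequence with $g(y_n)\to\inf g$ is bounded, so $(y_n)$ is bounded. By Lemma~\ref{lem:weakComp}, $(y_n)$ then has a weak cluster point $\bar y\in X$, i.e.\ a subsequence $(y_{n_k})$ with $y_{n_k}\to^w \bar y$.

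The final step is to verify $g(\bar y)=m$, for which I need $g$ to be weakly lsc. The term $f$ is quasiconvex and lsc by assumption, so by Lemma~\ref{lem:lscWeak} it is weakly lsc. For the quadratic term, I would use that $y\mapsto d^2(x,y)$ is convex (a consequence of $\mathrm{CN}^+$ from Lemma~\ref{cn_ineq}, taking $z=x$) and continuous, hence itself quasiconvex and lsc, so again weakly lsc by Lemma~\ref{lem:lscWeak}. Since $g$ is a sum of two weakly lsc functions, it is weakly lsc, and therefore
\[
m\leq g(\bar y)\leq\liminf_{k\to\infty}g(y_{n_k})=m,
\]
so $\bar y$ attains the infimum and $\bar y\in\mathrm{Prox}_{\beta f}(x)$, which is thus nonempty.

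I expect the main subtlety to lie in justifying weak lower semicontinuity of the composite object function $g$: one must confirm that the sum of two weakly lsc functions is again weakly lsc (which holds because the $\liminf$ inequality is preserved under addition of a weakly lsc summand and, more carefully, because one summand is actually weakly continuous or at least the $\liminf$ of the sum dominates the sum of the $\liminf$s), and that the squared-distance term genuinely falls under the hypotheses of Lemma~\ref{lem:lscWeak}. Everything else is a routine chaining of the cited lemmas.
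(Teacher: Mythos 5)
Your proposal is correct and follows essentially the same route as the paper's own proof: a minimizing sequence for $g(y)=f(y)+\frac{1}{2\beta}d^2(x,y)$, boundedness via supercoercivity (Lemma \ref{lem:supercoBound}), a weak cluster point via Lemma \ref{lem:weakComp}, and weak lower semicontinuity of $g$ obtained by combining Lemma \ref{lem:lscWeak} for $f$ with the convexity of the squared-distance term from \eqref{cn_ineq}. The closing remarks on the stability of weak lsc under sums are accurate and match the (implicit) reasoning in the paper.
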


\begin{proof}
As commented on before, the function $g(y):=f(y)+\frac{1}{2\beta}d^2(x,y)$ is supercoercive and by assumption $f$, and hence $g$, are bounded below. Take a sequence $(y_n)$ such that $g(y_n)\to \inf g$. By Lemma \ref{lem:supercoBound}, $(y_n)$ is bounded and so by Lemma \ref{lem:weakComp}, $(y_n)$ has a weak cluster point $y$, i.e.\ there exists a subsequence $(y_{n_k})$ such that $y_{n_k}\to^w y$. As $f$ is quasiconvex and lower semicontinuous, by Lemma \ref{lem:lscWeak} we get that $f$ is weakly lower semicontinuous. Also, by \eqref{cn_ineq}, $\frac{1}{2\beta}d^2(x,y)$ is in particular convex and clearly lower semicontinuous, and so also weakly lower semicontinuous. Therefore $g$ is weakly lower semicontinuous and we have
\[
g(y)\leq\liminf_{k\to\infty}g(y_{n_k})=\inf g
\]
as $g(y_{n_k})\to \inf g$. Thus $y$ is a minimizer of $g$ and so clearly $y\in \mathrm{Prox}_{\beta f}(x)$.
\end{proof}

Now, the main property of the proximal map of a strongly quasiconvex function that we rely on is the following, which extends an analogous result due to Lara \cite{Lar2022} (see Proposition 7 therein) from $\mathbb{R}^n$ to Hadamard spaces:

\begin{lemma}\label{lem:ProxRes}
Let $f:X\to\mathbb{R}$ be a strongly quasiconvex function with value $\gamma>0$ and let $\beta>0$ and $x\in X$. If $\overline{x}\in\mathrm{Prox}_{\beta f}(x)$, then
\[
f(\overline{x})\leq\max\{f(y),f(\overline{x})\}+\frac{\lambda}{2}\left(\frac{\lambda}{\beta}-\gamma+\lambda\gamma\right)d^2(y,\overline{x})+\frac{\lambda}{\beta}\ql{x\overline{x}}{\overline{x}y}
\]
for all $y\in X$ and $\lambda\in[0,1]$.
\end{lemma}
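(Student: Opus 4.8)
The plan is to use that $\overline{x}\in\mathrm{Prox}_{\beta f}(x)$ means precisely that $\overline{x}$ minimizes the auxiliary function $g(z):=f(z)+\frac{1}{2\beta}d^2(x,z)$, so that $g(\overline{x})\leq g(z)$ for every $z\in X$. The decisive step is to feed the right comparison point into this inequality: I would take $z:=(1-\lambda)\overline{x}\oplus\lambda y$, the point on the geodesic from $\overline{x}$ to $y$ at parameter $\lambda$. This yields
\[
f(\overline{x})+\frac{1}{2\beta}d^2(x,\overline{x})\leq f((1-\lambda)\overline{x}\oplus\lambda y)+\frac{1}{2\beta}d^2(x,(1-\lambda)\overline{x}\oplus\lambda y),
\]
and the whole proof amounts to estimating the two terms on the right and rearranging.

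For the first term I would invoke strong quasiconvexity directly, giving $f((1-\lambda)\overline{x}\oplus\lambda y)\leq\max\{f(\overline{x}),f(y)\}-\lambda(1-\lambda)\frac{\gamma}{2}d^2(\overline{x},y)$. For the second term I would apply the inequality \eqref{cn_ineq} with the third point taken to be $x$, obtaining $d^2(x,(1-\lambda)\overline{x}\oplus\lambda y)\leq(1-\lambda)d^2(x,\overline{x})+\lambda d^2(x,y)-\lambda(1-\lambda)d^2(\overline{x},y)$. Substituting both bounds and moving the term $\frac{1}{2\beta}d^2(x,\overline{x})$ from the left across, the contribution $(1-\lambda)d^2(x,\overline{x})$ collapses to $-\lambda d^2(x,\overline{x})$, leaving a remaining mixed term $\frac{\lambda}{2\beta}\left(d^2(x,y)-d^2(x,\overline{x})\right)$. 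This is exactly where the quasi-linearization enters: by Lemma \ref{lem:quasiLinLem}, applied with $\overline{x}$ in the role of the middle point, one has $d^2(x,y)=d^2(x,\overline{x})+d^2(\overline{x},y)+2\ql{x\overline{x}}{\overline{x}y}$, so that $d^2(x,y)-d^2(x,\overline{x})=d^2(\overline{x},y)+2\ql{x\overline{x}}{\overline{x}y}$, producing the desired term $\frac{\lambda}{\beta}\ql{x\overline{x}}{\overline{x}y}$ together with one further $d^2(\overline{x},y)$ contribution.

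It then remains to collect the three coefficients of $d^2(\overline{x},y)$: the $-\frac{\lambda(1-\lambda)\gamma}{2}$ coming from strong quasiconvexity, the $-\frac{\lambda(1-\lambda)}{2\beta}$ coming from \eqref{cn_ineq}, and the $+\frac{\lambda}{2\beta}$ coming from the quasi-linearization substitution. These sum to $\frac{\lambda^2}{2\beta}-\frac{\lambda(1-\lambda)\gamma}{2}=\frac{\lambda}{2}\left(\frac{\lambda}{\beta}-\gamma+\lambda\gamma\right)$, which is precisely the coefficient in the claimed inequality. I expect no genuine obstacle here: the argument is a single substitution followed by the two curvature/convexity estimates and straightforward bookkeeping, and the endpoint cases $\lambda\in\{0,1\}$ are automatically covered since both estimates degenerate to trivialities there (at $\lambda=0$ the point $z$ is $\overline{x}$ itself). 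The only real \emph{idea} is the choice of the interpolating point $z$, which is what makes strong quasiconvexity and \eqref{cn_ineq} each contribute a $-\lambda(1-\lambda)d^2(\overline{x},y)$ term that can be balanced against the quasi-linearization remainder.
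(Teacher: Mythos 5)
Your proposal is correct and follows essentially the same route as the paper: the same comparison point $z=(1-\lambda)\overline{x}\oplus\lambda y$, the same use of strong quasiconvexity and of \eqref{cn_ineq}, and the same identification of the leftover mixed term with $\frac{\lambda}{\beta}\ql{x\overline{x}}{\overline{x}y}$ (the paper does this by adding and subtracting the relevant squared distances rather than citing Lemma \ref{lem:quasiLinLem}, which is only a cosmetic difference). The bookkeeping of the three coefficients of $d^2(\overline{x},y)$ is also exactly as in the paper.
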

\begin{proof}
If $\overline{x}\in\mathrm{Prox}_{\beta f}(x)$, then by definition
\[
f(\overline{x})+\frac{1}{2\beta}d^2(\overline{x},x)\leq f(z)+\frac{1}{2\beta}d^2(z,x)
\]
for all $z\in X$. Now, given $y\in X$ and $\lambda\in [0,1]$, we set $z=(1-\lambda)\overline{x}\oplus \lambda y$ in the above to obtain
\begin{align*}
f(\overline{x})+\frac{1}{2\beta}d^2(\overline{x},x)&\leq f((1-\lambda)\overline{x}\oplus \lambda y)+\frac{1}{2\beta}d^2((1-\lambda)\overline{x}\oplus \lambda y,x)\\
&\leq \max\{f(y),f(\overline{x})\}-\lambda(1-\lambda)\frac{\gamma}{2}d^2(y,\overline{x})+\frac{1}{2\beta}d^2((1-\lambda)\overline{x}\oplus \lambda y,x)
\end{align*}
using that $f$ is strongly quasiconvex with value $\gamma$. Now, using \eqref{cn_ineq}, we get
\[
d^2((1-\lambda) \overline{x}\oplus \lambda y,x)\leq (1-\lambda) d^2(\overline{x},x)+\lambda d^2(y,x)-\lambda(1-\lambda)d^2(y,\overline{x})
\]
and so
\begin{align*}
f(\overline{x})&\leq \max\{f(y),f(\overline{x})\}-\frac{\lambda}{2}(\gamma-\lambda\gamma) d^2(y,\overline{x})+\frac{\lambda}{2}\frac{\lambda}{\beta}d^2(y,\overline{x})\\
&\qquad+\frac{1}{2\beta} d^2(\overline{x},x)-\frac{\lambda}{2\beta} d^2(\overline{x},x)+\frac{\lambda}{2\beta} d^2(y,x)-\frac{\lambda}{2\beta}d^2(y,\overline{x})-\frac{1}{2\beta}d^2(\overline{x},x)\\
&\leq \max\{f(y),f(\overline{x})\}+\frac{\lambda}{2}\left(\frac{\lambda}{\beta}-\gamma+\lambda\gamma\right) d^2(y,\overline{x})+\frac{\lambda}{\beta}\cdot\frac{1}{2}\left(d^2(y,x)-d^2(\overline{x},x)-d^2(y,\overline{x})\right)\\
&\leq \max\{f(y),f(\overline{x})\}+\frac{\lambda}{2}\left(\frac{\lambda}{\beta}-\gamma+\lambda\gamma\right) d^2(y,\overline{x})+\frac{\lambda}{\beta}\ql{x\overline{x}}{\overline{x}y}
\end{align*}
which is what we wanted to show.
\end{proof}

Before moving on to the proximal point algorithm, we want to highlight that, also in this nonlinear context, fixed points of the proximal map are exactly the minimizers of the strongly quasiconvex function. The proof given here is a simple nonlinear version of Lara's proof of an analogous result given in \cite{Lar2022} (see Proposition 9 therein).

\begin{proposition}
Let $f:X\to\mathbb{R}$ be a strongly quasiconvex function with value $\gamma>0$. Then
\[
\mathrm{Fix}(\mathrm{Prox}_{\beta f})=\mathrm{argmin}f
\]
for any $\beta>0$.
\end{proposition}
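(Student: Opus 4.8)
The plan is to prove the set equality by establishing the two inclusions separately. The inclusion $\mathrm{argmin}f\subseteq\mathrm{Fix}(\mathrm{Prox}_{\beta f})$ is essentially immediate from the definition of the proximal map, while the reverse inclusion is the substantive part and is where I would invoke Lemma \ref{lem:ProxRes}.

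For the easy direction, let $x^*\in\mathrm{argmin}f$. I would argue directly: for every $y\in X$ we have $f(y)+\frac{1}{2\beta}d^2(x^*,y)\geq f(x^*)=f(x^*)+\frac{1}{2\beta}d^2(x^*,x^*)$, using $f(y)\geq f(x^*)$ together with $d^2(x^*,y)\geq 0$ and $d^2(x^*,x^*)=0$. Hence $x^*$ minimizes $y\mapsto f(y)+\frac{1}{2\beta}d^2(x^*,y)$, i.e.\ $x^*\in\mathrm{Prox}_{\beta f}(x^*)$, so $x^*$ is a fixed point. Note this direction requires neither the nonemptiness results nor strong quasiconvexity beyond $f(x^*)\leq f(y)$.

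For the reverse inclusion, let $\overline{x}\in\mathrm{Fix}(\mathrm{Prox}_{\beta f})$, so $\overline{x}\in\mathrm{Prox}_{\beta f}(\overline{x})$. The key move is to apply Lemma \ref{lem:ProxRes} with the special choice $x=\overline{x}$. The crucial simplification is that the quasi-linearization term then vanishes: by the antisymmetry property (3) of the quasi-linearization function, $\ql{\overline{x}\,\overline{x}}{\overline{x}y}=0$ for every $y$. Thus the estimate of Lemma \ref{lem:ProxRes} collapses to $f(\overline{x})\leq\max\{f(y),f(\overline{x})\}+\frac{\lambda}{2}\left(\frac{\lambda}{\beta}-\gamma+\lambda\gamma\right)d^2(y,\overline{x})$ for all $y\in X$ and $\lambda\in[0,1]$. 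I would then argue by contradiction: if $\overline{x}\notin\mathrm{argmin}f$, there is some $y$ with $f(y)<f(\overline{x})$, whence $y\neq\overline{x}$ (so $d^2(y,\overline{x})>0$) and $\max\{f(y),f(\overline{x})\}=f(\overline{x})$. Cancelling $f(\overline{x})$ leaves $0\leq\frac{\lambda}{2}\left(\frac{\lambda}{\beta}-\gamma+\lambda\gamma\right)d^2(y,\overline{x})$. Since the coefficient $\frac{\lambda}{\beta}-\gamma+\lambda\gamma=\lambda\left(\frac{1}{\beta}+\gamma\right)-\gamma$ equals $-\gamma<0$ at $\lambda=0$, it is strictly negative for all sufficiently small $\lambda>0$ (concretely for any $\lambda<\frac{\gamma\beta}{1+\gamma\beta}$); picking such a $\lambda$ makes the right-hand side strictly negative while the left-hand side is $0$, a contradiction. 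Hence $f(\overline{x})\leq f(y)$ for all $y$, i.e.\ $\overline{x}\in\mathrm{argmin}f$.

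The main obstacle lies entirely in this reverse inclusion, and it is conceptual rather than computational: one must spot the right specialization of Lemma \ref{lem:ProxRes}, namely evaluating at $x=\overline{x}$ to annihilate the inner-product term, and then realize that the decisive leverage comes not from a fixed $\lambda$ but from the freedom to let $\lambda\to 0^{+}$, where the quadratic coefficient becomes negative and forces the contradiction.
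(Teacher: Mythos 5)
Your proof is correct and follows essentially the same route as the paper: both directions are handled identically, and your key inequality $f(\overline{x})\leq\max\{f(y),f(\overline{x})\}+\frac{\lambda}{2}\bigl(\frac{\lambda}{\beta}-\gamma+\lambda\gamma\bigr)d^2(y,\overline{x})$ together with the choice of a small $\lambda\in\bigl(0,\beta\gamma/(1+\beta\gamma)\bigr)$ is exactly the paper's argument. The only (harmless) difference is that you obtain this inequality by specializing Lemma \ref{lem:ProxRes} at $x=\overline{x}$ so that the quasi-linearization term vanishes, whereas the paper re-derives it directly from the definition of the proximal map using the geodesic identity $d((1-\lambda)\overline{x}\oplus\lambda y,\overline{x})=\lambda d(\overline{x},y)$.
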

\begin{proof}
Let $\overline{x}\in \mathrm{Prox}_{\beta f}(\overline{x})$. Then
\[
f(\overline{x})= f(\overline{x})+\frac{1}{2\beta}d^2(\overline{x},\overline{x})\leq f(x)+\frac{1}{2\beta}d^2(x,\overline{x})
\]
for all $x\in X$. Taking $x=(1-\lambda)\overline{x}\oplus \lambda y$ for some arbitrary $y\neq\overline{x}$ (if $X$ consists of a single point, the result is trivial) and $\lambda\in [0,1]$ in the above then yields
\begin{align*}
f(\overline{x})&\leq f((1-\lambda)\overline{x}\oplus \lambda y)+\frac{1}{2\beta}d^2((1-\lambda)\overline{x}\oplus \lambda y,\overline{x})\\
&\leq \max\{f(\overline{x}),f(y)\}-\lambda(1-\lambda)\frac{\gamma}{2}d^2(\overline{x},y)+\frac{1}{2\beta}d^2((1-\lambda)\overline{x}\oplus \lambda y,\overline{x})\\
&= \max\{f(\overline{x}),f(y)\}-\lambda(1-\lambda)\frac{\gamma}{2}d^2(\overline{x},y)+\frac{1}{2\beta}\lambda^2 d^2(\overline{x}, y)\\
&= \max\{f(\overline{x}),f(y)\}+\frac{\lambda}{2}\left(\frac{\lambda}{\beta}-\gamma+\lambda\gamma\right)d^2(\overline{x},y)
\end{align*}
where the third line follows from the fact that
\[
d((1-\lambda)\overline{x}\oplus \lambda y,\overline{x}) = \lambda d(\overline{x}, y)
\]
holds in the hyperbolic space $X$. Taking $\lambda\in (0,\beta\gamma/(1+\beta\gamma))$ yields $\lambda/\beta-\gamma+\lambda\gamma<0$ and so
\[
f(\overline{x})<\max\{f(\overline{x}),f(y)\}
\]
which implies $f(\overline{x})<f(y)$. As $y\in X\setminus \{\overline{x}\}$ was arbitrary, we get $\overline{x}\in\mathrm{argmin}f$.

Conversely, let $\overline{x}\in \mathrm{argmin}f$. Then
\[
f(\overline{x})+\frac{1}{2\beta}d^2(\overline{x},\overline{x})=f(\overline{x})\leq f(y)\leq f(y)+\frac{1}{2\beta}d^2(\overline{x},y)
\]
for any $y\in X$ and so $\overline{x}\in\mathrm{Prox}_{\beta f}(\overline{x})$.
\end{proof}

\section{The proximal point algorithm for strongly quasiconvex functions}\label{sec:ppa}

We now are concerned with the proximal point algorithm for a strongly quasiconvex lsc function $f$ and its convergence to the minimizer, if existent.\\

Concretely, by the proximal point algorithm, we shall in the following understand the following method: Given a sequence $(c_k)$ of nonnegative reals with $\inf_{k\in\mathbb{N}}c_k\geq c>0$ and an arbitrary $x^0\in X$, define an iteration $(x^k)$ by choosing
\[
x^{k+1}\in\mathrm{Prox}_{c_kf}(x^k).\tag{PPA}\label{PPA}
\]
We assume throughout that $\mathrm{argmin}f\neq\emptyset$. By Lemma \ref{lem:minUnique}, we have that $\mathrm{argmin}f$ is a singleton and we denote the unique minimizer by $x^*$ in the following. By Lemma \ref{lem:proxNE}, the sequence $(x^k)$ is well-defined.\\

The first crucial result is that the sequence $(x^k)$ defined by \eqref{PPA} satisfies the following recursive inequality (modeled, together with its proof, after Eq.\ (4.5) in \cite{Lar2022}), which in particular implies that $(x^k)$ is Fej\'er monotone w.r.t.\ $\mathrm{argmin}f$:

\begin{lemma}\label{lem:extFejMon}
Let $(x^k)$ be defined by \eqref{PPA}. Then for any $k\in\mathbb{N}$:
\[
d^2(x^{k+1},x^*)\leq d^2(x^k,x^*)-d^2(x^k,x^{k+1}).
\]
\end{lemma}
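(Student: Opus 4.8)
The plan is to leverage Lemma~\ref{lem:ProxRes} with the natural choices $\overline{x}=x^{k+1}$, $x=x^k$, and $y=x^*$, exploiting that $x^*$ is the global minimizer so that $\max\{f(x^*),f(x^{k+1})\}=f(x^{k+1})$. With that substitution the left-hand side $f(x^{k+1})$ cancels against the $\max$ term on the right, and the inequality collapses to a purely metric statement involving only $d^2(x^*,x^{k+1})$ and the quasi-linearization term $\ql{x^k x^{k+1}}{x^{k+1}x^*}$, with coefficients depending on $\lambda$, $c_k$, and $\gamma$.

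First I would write out, for $\beta=c_k$, what Lemma~\ref{lem:ProxRes} gives after the cancellation:
\[
0\leq\frac{\lambda}{2}\left(\frac{\lambda}{c_k}-\gamma+\lambda\gamma\right)d^2(x^*,x^{k+1})+\frac{\lambda}{c_k}\ql{x^k x^{k+1}}{x^{k+1}x^*}.
\]
Dividing by $\lambda>0$ (valid for $\lambda\in(0,1]$) and rearranging isolates the quasi-linearization term against a multiple of $d^2(x^*,x^{k+1})$. The intended route is then to take the limit $\lambda\to 0^+$, which kills the $d^2(x^*,x^{k+1})$ coefficient entirely and yields the clean bound
\[
0\leq\frac{1}{c_k}\ql{x^k x^{k+1}}{x^{k+1}x^*},
\]
i.e.\ $\ql{x^k x^{k+1}}{x^{k+1}x^*}\geq 0$.

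The final step is a purely metric identity. Applying Lemma~\ref{lem:quasiLinLem} with the points $x^k$, $x^{k+1}$, $x^*$ (matching the roles $x,z,y$) gives
\[
d^2(x^k,x^*)=d^2(x^k,x^{k+1})+d^2(x^{k+1},x^*)+2\ql{x^k x^{k+1}}{x^{k+1}x^*},
\]
and since the quasi-linearization term is nonnegative, rearranging immediately yields
\[
d^2(x^{k+1},x^*)\leq d^2(x^k,x^*)-d^2(x^k,x^{k+1}),
\]
which is exactly the claim. Fej\'er monotonicity follows at once since $d^2(x^{k+1},x^*)\leq d^2(x^k,x^*)$.

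The main obstacle I anticipate is justifying the $\lambda\to 0^+$ passage cleanly: one must confirm that the $\lambda$-dependent coefficient $\frac{1}{2}(\frac{\lambda}{c_k}-\gamma+\lambda\gamma)d^2(x^*,x^{k+1})$ genuinely vanishes in the limit and that the inequality is preserved (it is, since every $\lambda\in(0,1]$ gives a valid inequality and the quasi-linearization term is independent of $\lambda$). An equally valid alternative avoiding limits is to simply choose $\lambda$ small enough that $\frac{\lambda}{c_k}-\gamma+\lambda\gamma<0$ and observe that the $d^2(x^*,x^{k+1})$ term is then nonpositive, again forcing $\ql{x^k x^{k+1}}{x^{k+1}x^*}\geq 0$; this sidesteps any continuity concern. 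Either way the inequality for the quasi-linearization term is the one genuinely nontrivial input, and everything else is the algebraic identity of Lemma~\ref{lem:quasiLinLem}.
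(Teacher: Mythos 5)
Your proof is correct and follows essentially the same route as the paper: Lemma~\ref{lem:ProxRes} with $\overline{x}=x^{k+1}$, $x=x^k$, $y=x^*$, exploiting $\max\{f(x^*),f(x^{k+1})\}=f(x^{k+1})$, then smallness of $\lambda$ to control the sign of the $d^2$-coefficient, and finally Lemma~\ref{lem:quasiLinLem} (your application of the identity directly to $d^2(x^k,x^*)$ is in fact slightly tidier than the paper's, which expands $d^2(x^{k+1},x^*)$ and needs an extra round of quasi-linearization algebra). One small slip in your description: as $\lambda\to 0^+$ the coefficient $\frac{1}{2}\left(\frac{\lambda}{c_k}-\gamma+\lambda\gamma\right)$ tends to $-\gamma/2$, not to $0$, so the limit does not ``kill'' that term but instead yields the stronger bound $\ql{x^kx^{k+1}}{x^{k+1}x^*}\geq\frac{\gamma c_k}{2}d^2(x^{k+1},x^*)$ --- which of course still gives the nonnegativity you need --- and your stated alternative of fixing one small $\lambda$ making the coefficient negative is exactly what the paper does.
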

\begin{proof}
Fix $k\in\mathbb{N}$. Using that $x^{k+1}\in\mathrm{Prox}_{c_kf}(x^k)$, Lemma \ref{lem:ProxRes} implies
\[
f(x^{k+1})\leq\max\{f(y),f(x^{k+1})\}+\frac{\lambda}{2}\left(\frac{\lambda}{c_k}-\gamma+\lambda\gamma\right)d^2(y,x^{k+1})+\frac{\lambda}{c_k}\ql{x^kx^{k+1}}{x^{k+1}y}
\]
for all $y\in X$ and $\lambda\in[0,1]$. Take $y=x^*$. Then we get
\begin{align*}
0&= f(x^{k+1})-\max\{f(x^*),f(x^{k+1})\}\\
&\leq \frac{\lambda}{2}\left(\frac{\lambda}{c_k}-\gamma+\lambda\gamma\right)d^2(x^{k+1},x^*)+\frac{\lambda}{c_k}\ql{x^kx^{k+1}}{x^{k+1}x^*}
\end{align*}
for all $\lambda\in [0,1]$ and so (using the characterizing properties of the quasi-linearization function), we have
\[
\ql{x^kx^{k+1}}{x^*x^{k+1}}\leq \frac{c_k}{2}\left(\frac{\lambda}{c_k}-\gamma+\lambda\gamma\right)d^2(x^{k+1},x^*)
\]
whenever $\lambda>0$. Now, Lemma \ref{lem:quasiLinLem} together with the characterizing properties of the quasi-linearization function imply
\begin{align*}
d^2(x^{k+1},x^*)&=d^2(x^{k+1},x^k) + d^2(x^k,x^*) + 2\ql{x^{k+1}x^k}{x^kx^*}\\
&=d^2(x^{k+1},x^k) + d^2(x^k,x^*) + 2\ql{x^{k+1}x^k}{x^kx^{k+1}}+2\ql{x^{k+1}x^k}{x^{k+1}x^*}\\
&=d^2(x^{k+1},x^k) + d^2(x^k,x^*) - 2d^2(x^{k+1},x^k)+2\ql{x^{k+1}x^k}{x^{k+1}x^*}\\
&\leq d^2(x^k,x^*) - d^2(x^{k+1},x^k)+c_k\left(\frac{\lambda}{c_k}-\gamma+\lambda\gamma\right)d^2(x^{k+1},x^*)
\end{align*}
for any $\lambda\in (0,1]$. Taking $\lambda\in (0,\gamma c_k/(1+\gamma c_k))$, we get $c_k\left(\frac{\lambda}{c_k}-\gamma+\lambda\gamma\right)<0$ so that
\[
d^2(x^{k+1},x^*)\leq d^2(x^k,x^*)-d^2(x^k,x^{k+1}).\qedhere
\]
\end{proof}

The above lemma not only yields that the sequence $(x^k)$ defined by \eqref{PPA} is Fej\'er monotone but also that $\lim_{k\to\infty}d^2(x^k,x^{k+1})=0$. For the construction of our rate of convergence, we need a quantitative version of the special case that $\liminf_{k\to\infty}d^2(x^k,x^{k+1})=0$ which we construct in the following. For that, we first need the following abstract quantitative result on sequences of real numbers.

\begin{lemma}[folklore]\label{lem:descProp}
Let $(a_k)$ be a sequence of nonnegative reals and let $b\geq a_0$. Then for any $\varepsilon>0$:
\[
\exists k\leq \ceil*{\frac{b}{\varepsilon}}\left( a_k - a_{k+1}<\varepsilon\right).
\]
\end{lemma}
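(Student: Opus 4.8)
The plan is to argue by contradiction using a telescoping sum together with the nonnegativity of the sequence. Write $N := \ceil*{b/\varepsilon}$ for brevity. First I would assume, toward a contradiction, that the claimed index does not exist, i.e.\ that $a_k - a_{k+1} \geq \varepsilon$ holds for every $k \in \{0, 1, \ldots, N\}$. This turns the desired existential statement into a uniform lower bound on all the consecutive differences up to index $N$.

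Next I would exploit that these differences telescope. Summing the $N+1$ assumed inequalities yields
\[
a_0 - a_{N+1} = \sum_{k=0}^{N}(a_k - a_{k+1}) \geq (N+1)\varepsilon.
\]
The only quantitative input beyond this is the elementary ceiling estimate $N = \ceil*{b/\varepsilon} \geq b/\varepsilon$, which gives $(N+1)\varepsilon \geq b + \varepsilon > b$, and hence $a_0 - a_{N+1} > b$.

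Finally, combining this with the hypothesis $b \geq a_0$ yields $a_{N+1} < a_0 - b \leq 0$, contradicting the assumed nonnegativity of $(a_k)$. This contradiction establishes that there must be some $k \leq N = \ceil*{b/\varepsilon}$ with $a_k - a_{k+1} < \varepsilon$, as claimed.

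I do not expect any genuine obstacle here, as the statement is a purely arithmetic pigeonhole fact: if a nonnegative sequence dropped by at least $\varepsilon$ on each of the first $N+1$ steps, it would have to go below $a_0 - (N+1)\varepsilon < a_0 - b \leq 0$. The only points demanding a moment of care are getting the index range right (so that exactly $N+1$ differences are summed and the telescoping cleanly leaves $a_0 - a_{N+1}$) and invoking $\ceil*{b/\varepsilon} \geq b/\varepsilon$ to strictly exceed the bound $b$. The role of each hypothesis is transparent: nonnegativity is what forbids the sequence from decreasing indefinitely, while the bound $b \geq a_0$ is precisely what makes the explicit index $\ceil*{b/\varepsilon}$ admissible.
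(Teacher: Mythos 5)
Your proposal is correct and is essentially identical to the paper's own proof: both argue by contradiction, telescope the $N+1$ assumed inequalities to get $\sum_{k=0}^{N}(a_k-a_{k+1})\geq (N+1)\varepsilon\geq b+\varepsilon>b$, and then derive a contradiction from $b\geq a_0$ together with the nonnegativity of the sequence. The only cosmetic difference is that the paper phrases the final contradiction as $a_0>b\geq a_0$ while you phrase it as $a_{N+1}<0$; these are the same step rearranged.
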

\begin{proof}
Suppose the claim is false, i.e.\ there is an $\varepsilon>0$ such that 
\[
a_k - a_{k+1}\geq \varepsilon
\]
for all $k\leq \ceil*{\frac{b}{\varepsilon}}$. Then
\[
a_0\geq a_0-a_{\ceil*{\frac{b}{\varepsilon}}+1}=\sum_{k=0}^{\ceil*{\frac{b}{\varepsilon}}}(a_k-a_{k+1})\geq \left(\ceil*{\frac{b}{\varepsilon}}+1\right)\varepsilon\geq b+\varepsilon>b
\]
which is a contradiction.
\end{proof}

\begin{lemma}\label{lem:approxProp}
Let $b\geq d^2(x^0,x^*)$. Then for any $\varepsilon>0$:
\[
\exists k\leq \ceil*{\frac{b}{\varepsilon}}\left( d^2(x^k,x^{k+1})<\varepsilon\right).
\]
\end{lemma}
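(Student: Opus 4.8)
The plan is to reduce this statement directly to the abstract real-sequence result of Lemma \ref{lem:descProp} by choosing the right auxiliary sequence. The natural candidate is the sequence of squared distances to the minimizer, namely $a_k := d^2(x^k,x^*)$. This is manifestly a sequence of nonnegative reals, and by hypothesis its initial term satisfies $a_0 = d^2(x^0,x^*)\leq b$, so the assumptions of Lemma \ref{lem:descProp} are met with this choice of $b$.

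The key link is supplied by Lemma \ref{lem:extFejMon}, which asserts $d^2(x^{k+1},x^*)\leq d^2(x^k,x^*)-d^2(x^k,x^{k+1})$ for every $k$. Rewriting this in terms of $(a_k)$ gives the pointwise bound
\[
d^2(x^k,x^{k+1})\leq a_k - a_{k+1}
\]
for all $k\in\mathbb{N}$. Thus any upper bound on the consecutive difference $a_k-a_{k+1}$ transfers immediately to an upper bound on $d^2(x^k,x^{k+1})$.

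First I would fix an arbitrary $\varepsilon>0$ and apply Lemma \ref{lem:descProp} to the sequence $(a_k)$ and the constant $b$, obtaining some $k\leq\ceil*{\frac{b}{\varepsilon}}$ with $a_k-a_{k+1}<\varepsilon$. Then I would chain this with the displayed inequality above to conclude $d^2(x^k,x^{k+1})\leq a_k-a_{k+1}<\varepsilon$ for that same index $k$, which is exactly the desired conclusion. There is essentially no obstacle here: the entire content of the lemma lies in the Fej\'er-type recursion of Lemma \ref{lem:extFejMon} (which does the geometric work) and in the bookkeeping of Lemma \ref{lem:descProp} (which does the counting); the present statement is merely their composition, and the only point requiring minor care is that the index bound $\ceil*{\frac{b}{\varepsilon}}$ is inherited verbatim from Lemma \ref{lem:descProp} because we feed it precisely the same $b$ and $\varepsilon$.
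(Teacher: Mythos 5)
Your proposal is correct and matches the paper's proof exactly: both set $a_k=d^2(x^k,x^*)$, use Lemma \ref{lem:extFejMon} to bound $d^2(x^k,x^{k+1})\leq a_k-a_{k+1}$, and then apply Lemma \ref{lem:descProp}. Nothing to add.
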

\begin{proof}
Using Lemma \ref{lem:extFejMon}, we get 
\[
d^2(x^k,x^{k+1})\leq d^2(x^k,x^*) - d^2(x^{k+1},x^*).
\]
The result now follows from Lemma \ref{lem:descProp} using $a_k=d^2(x^k,x^*)$.
\end{proof}

The preceding results then allow us to give the following elementary proof of a rate of convergence for the iteration $(x^k)$ and hence in particular also for the first time derive the convergence of it towards the minimizer in the general setting of Hadamard spaces.

\begin{theorem}\label{thm:rateOfConv}
Let $X$ be a Hadamard space and let $f:X\to\mathbb{R}$ be a strongly quasiconvex lsc function with value $\gamma>0$ and where $\mathrm{argmin}f\neq\emptyset$. Let $(c_k)$ be a sequence of nonnegative reals with $\inf_{k\in\mathbb{N}}c_k\geq c>0$. Then the sequence $(x^k)$ defined by \eqref{PPA} converges to the (unique) minimizer $x^*$ of $f$ and $f(x^k)$ converges to $\min f$. Further:
\[
\forall \varepsilon>0\ \forall k\geq \varphi(\varepsilon)\left(d(x^k,x^*)<\varepsilon\right)
\]
as well as
\[
\forall \varepsilon>0\ \forall k\geq\varphi(\sqrt{2c\varepsilon})+1\left(f(x^k)-\min f<\varepsilon\right)
\]
where
\[
\varphi(\varepsilon):=\ceil*{\frac{4b}{\varepsilon^2((1-\lambda_0)\gamma c-\lambda_0)^2}}+1
\]
with $\lambda_0=\frac{1}{2}\frac{\gamma c}{1+\gamma c}$ and $b\geq d^2(x^0,x^*)$.
\end{theorem}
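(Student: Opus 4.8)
The plan is to assemble the theorem from the recursive inequality in Lemma~\ref{lem:extFejMon} and the quantitative descent bound in Lemma~\ref{lem:approxProp}, using a sharpened version of Lemma~\ref{lem:ProxRes} to convert the ``consecutive iterates are close'' statement into a ``current iterate is close to $x^*$'' statement. First I would observe that Lemma~\ref{lem:extFejMon} makes $(d^2(x^k,x^*))_k$ nonincreasing, so $(x^k)$ is Fej\'er monotone with respect to $\{x^*\}$; consequently, to prove $d(x^k,x^*)<\varepsilon$ for all large $k$, it suffices to produce a \emph{single} index $j$ with $d(x^j,x^*)<\varepsilon$, since monotonicity then propagates the bound to every $k\geq j$. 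This is exactly why the rate $\varphi$ will come out uniform and why only $\liminf_k d^2(x^k,x^{k+1})=0$ (quantified in Lemma~\ref{lem:approxProp}) is needed rather than the full limit.

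The key step is to relate $d(x^k,x^*)$ to $d(x^k,x^{k+1})$. Here I would revisit Lemma~\ref{lem:ProxRes} with $y=x^*$ and, crucially, fix the concrete choice $\lambda=\lambda_0=\tfrac12\,\tfrac{\gamma c}{1+\gamma c}$, which guarantees $\lambda_0/c_k-\gamma+\lambda_0\gamma<0$ uniformly in $k$ because $c_k\geq c>0$ (so $\lambda_0<\gamma c_k/(1+\gamma c_k)$). With this choice the estimate from the proof of Lemma~\ref{lem:extFejMon} can be rearranged: starting from
\[
0\leq \frac{\lambda_0}{2}\left(\frac{\lambda_0}{c_k}-\gamma+\lambda_0\gamma\right)d^2(x^{k+1},x^*)+\frac{\lambda_0}{c_k}\ql{x^kx^{k+1}}{x^{k+1}x^*},
\]
I would bound the quasi-linearization term via the Cauchy–Schwarz inequality \eqref{CS}, namely $\ql{x^kx^{k+1}}{x^{k+1}x^*}\leq d(x^k,x^{k+1})\,d(x^{k+1},x^*)$, to obtain an inequality of the shape $\bigl((1-\lambda_0)\gamma-\lambda_0/c_k\bigr)d^2(x^{k+1},x^*)\leq \tfrac{2}{c_k}d(x^k,x^{k+1})d(x^{k+1},x^*)$. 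Dividing by $d(x^{k+1},x^*)$ (the case $d(x^{k+1},x^*)=0$ being trivial) and using $c_k\geq c$ to replace $\tfrac{1}{c_k}$ by the worst-case constant, this yields a linear bound $d(x^{k+1},x^*)\leq C\,d(x^k,x^{k+1})$ with $C$ depending only on $\gamma$, $c$, and $\lambda_0$; tracking the constant should reproduce exactly the factor $((1-\lambda_0)\gamma c-\lambda_0)$ appearing in $\varphi$.

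Combining this linear bound with Lemma~\ref{lem:approxProp} finishes the iterate rate: given $\varepsilon>0$, choose the threshold $\varepsilon'$ so that $C\sqrt{\varepsilon'}<\varepsilon$, apply Lemma~\ref{lem:approxProp} to find $k\leq\ceil*{b/\varepsilon'}$ with $d^2(x^k,x^{k+1})<\varepsilon'$, conclude $d(x^{k+1},x^*)<\varepsilon$ at that index, and then invoke Fej\'er monotonicity to extend the bound to all $k\geq\varphi(\varepsilon)$ (the $+1$ in $\varphi$ accounting for the shift from index $k$ to $k+1$). For the function-value rate I would use that $x^{k+1}\in\mathrm{Prox}_{c_kf}(x^k)$ gives $f(x^{k+1})+\tfrac{1}{2c_k}d^2(x^k,x^{k+1})\leq f(x^*)+\tfrac{1}{2c_k}d^2(x^k,x^*)$, hence $f(x^{k+1})-\min f\leq \tfrac{1}{2c_k}d^2(x^k,x^*)\leq\tfrac{1}{2c}d^2(x^k,x^*)$; feeding in the iterate bound $d^2(x^k,x^*)<2c\varepsilon$ (available for $k\geq\varphi(\sqrt{2c\varepsilon})$) yields $f(x^{k+1})-\min f<\varepsilon$, which after reindexing is the stated bound for $k\geq\varphi(\sqrt{2c\varepsilon})+1$. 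The main obstacle I anticipate is purely bookkeeping: pinning down the constant in the linear bound so that it matches the published $\varphi$ exactly, in particular verifying that the sign condition $(1-\lambda_0)\gamma c-\lambda_0>0$ (needed for the factor to be positive and hence for $\varphi$ to be well-defined) indeed follows from the choice $\lambda_0=\tfrac12\tfrac{\gamma c}{1+\gamma c}$, which makes $(1-\lambda_0)\gamma c-\lambda_0$ a strictly positive quantity comfortably bounded away from zero.
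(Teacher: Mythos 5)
Your proposal is correct and follows essentially the same route as the paper: Lemma~\ref{lem:approxProp} to locate an index with small consecutive displacement, Lemma~\ref{lem:ProxRes} with $y=x^*$ and $\lambda=\lambda_0$ combined with the Cauchy--Schwarz inequality \eqref{CS} to obtain the linear bound $d(x^{k+1},x^*)\leq \frac{2}{(1-\lambda_0)\gamma c-\lambda_0}\,d(x^k,x^{k+1})$ (handling $d(x^{k+1},x^*)=0$ separately), Fej\'er monotonicity from Lemma~\ref{lem:extFejMon} to propagate the bound, and the defining inequality of the proximal map at $x^*$ for the function values. The bookkeeping with $\varepsilon'=\tfrac{\varepsilon^2}{4}((1-\lambda_0)\gamma c-\lambda_0)^2$ reproduces the stated $\varphi$ exactly, and your verification that $(1-\lambda_0)\gamma c-\lambda_0>0$ is the same as the paper's.
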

\begin{proof}
It suffices to show the quantitative results. Given $\varepsilon>0$, take
\[
k\leq \ceil*{\frac{4b}{\varepsilon^2((1-\lambda_0)\gamma c-\lambda_0)^2}}
\]
such that 
\[
d(x^{k+1},x^k)<\frac{1}{2}((1-\lambda_0)\gamma c-\lambda_0)\varepsilon
\]
using Lemma \ref{lem:approxProp}. As $x^{k+1}\in\mathrm{Prox}_{c_kf}(x^k)$ and as $c_k\geq c>0$, we get
\[
f(x^{k+1})\leq\max\{f(y),f(x^{k+1})\}+\frac{\lambda}{2}\left(\frac{\lambda}{c_k}-\gamma+\lambda\gamma\right)d^2(y,x^{k+1})+\frac{\lambda}{c_k}\ql{x^kx^{k+1}}{x^{k+1}y}
\]
for all $y\in X$ and $\lambda\in[0,1]$ from Lemma \ref{lem:ProxRes}. Taking $y=x^*$, we in particular have
\[
\frac{\lambda}{2}\left((1-\lambda)\gamma-\frac{\lambda}{c_k}\right)d^2(x^{k+1},x^*)\leq \frac{\lambda}{c_k}\ql{x^kx^{k+1}}{x^{k+1}x^*}
\]
and hence for $\lambda>0$ we get
\[
\frac{1}{2}\left((1-\lambda)\gamma c-\lambda\right)d^2(x^{k+1},x^*)\leq \ql{x^kx^{k+1}}{x^{k+1}x^*}\leq d(x^{k+1},x^k)d(x^{k+1},x^*).
\]
using \eqref{CS}. Now, either $d(x^{k+1},x^*)=0$, in which case the claim follows immediately from Lemma \ref{lem:extFejMon}, or $d(x^{k+1},x^*)>0$, in which case we have 
\[
\frac{1}{2}((1-\lambda)\gamma c-\lambda)d(x^{k+1},x^*)\leq d(x^{k+1},x^k)
\]
for any $\lambda\in (0,1]$. In particular, this holds for $\lambda=\lambda_0$ and we hence have
\[
\frac{1}{2}((1-\lambda_0)\gamma c-\lambda_0)d(x^{k+1},x^*)\leq d(x^{k+1},x^k)< \frac{1}{2}((1-\lambda_0)\gamma c-\lambda_0)\varepsilon.
\]
Since $(1-\lambda_0)\gamma c-\lambda_0>0$ holds by definition, we get
\[
d(x^{k+1},x^*)<\varepsilon,
\]
and so, using Lemma \ref{lem:extFejMon}, we in particular have $d(x^{j},x^*)<\varepsilon$ for any $j\geq k+1$.

For the second claim, note that again as $x^{k+1}\in\mathrm{Prox}_{c_kf}(x^k)$, we have
\[
f(x^{k+1})\leq f(x)+\frac{1}{2c_k}d^2(x,x^k)
\]
for any $k\in\mathbb{N}$ and any $x\in X$. In particular, for $x=x^*$, we get
\[
f(x^{k+1})-\min f \leq \frac{1}{2c}d^2(x^*,x^k).
\]
Now, using the first claim, we get that $d(x^*,x^k)<\sqrt{2c\varepsilon}$ for any $k\geq\varphi(\sqrt{2c\varepsilon})$ and so
\[
\frac{1}{2c}d^2(x^*,x^k)<\varepsilon
\]
from which the second claim follows immediately.
\end{proof}

\begin{remark}
The rate in the above Theorem \ref{thm:rateOfConv} could also have been obtained by an application of the general results on rates of convergence for Fej\'er monotone sequences under general metric regularity assumptions as developed in \cite{KLAN2019}. Concretely, by Lemma \ref{lem:extFejMon}, the iteration $(x^k)$ is Fej\'er monotone w.r.t.\ $\mathrm{argmin}f$ and the assumption that $f$ is strongly quasiconvex, which yields that the minimizer of $f$ is unique, can be used to construct a so-called modulus of uniqueness, which is a particular instance of a modulus of regularity as introduced in \cite{KLAN2019} (see in particular the comprehensive discussions in \cite{KLAN2019} on such issues). In fact, the strategy of the proof of Theorem \ref{thm:rateOfConv} is closely modeled after the proof of Theorem 4.1 in \cite{KLAN2019} but the above presentation allows us simultaneously to be more self-contained and to incorporate various slight optimizations into the rate and the argument.
\end{remark}

\begin{remark}
The above Theorem \ref{thm:rateOfConv} in particular implies Theorem 10 in \cite{Lar2022} since any (closed and convex subset of a) Euclidean space is a Hadamard space and since a strongly quasiconvex lsc function in Euclidean space has a minimizer which follows from Theorem 1 in \cite{Lar2022}. Naturally however, the above theorem is much broader as it not only applies in infinite dimensional Hilbert spaces but also in general arbitrary Hadamard spaces (recall also the discussion in the introduction). In particular, the rate of convergence from Theorem \ref{thm:rateOfConv} applies to the context of Lara's work \cite{Lar2022} which is not only, to our knowledge, the first general rate of convergence for that iteration but is even of very low complexity (being linear in the squared metric and in the convergence of the function values towards the minimizer).
\end{remark}

\begin{remark}
The proof of Theorem \ref{thm:rateOfConv} is still valid if the function is only strongly quasiconvex (i.e.\ not necessarily lsc) and if $X$ is only a $\CAT$ space (i.e.\ not necessarily complete) but in that case, we do not know if the iteration $(x^k)$ is well-defined.
\end{remark}

\noindent
{\bf Acknowledgments:}
This work benefited from conversations with Sorin-Mihai Grad, Lauren\c{t}iu Leu\c{s}tean, Nicoleta Dumitru and Ulrich Kohlenbach. The author was supported by the `Deutsche Forschungs\-gemein\-schaft' Project DFG KO 1737/6-2.

\bibliographystyle{plain}
\bibliography{ref}

\end{document}